\newtheorem{theorem}{Theorem}
\newtheorem{corollary}[theorem]{Corollary}
\newtheorem{lemma}[theorem]{Lemma}
\newtheorem{proposition}[theorem]{Proposition}
\newtheorem{remark}[theorem]{Remark}
\newenvironment{proof}[1][Proof]{\textbf{#1.} }{\ \rule{0.5em}{0.5em}}
\author{%
Luis Boza\hspace{-1.4mm}\vspace{-.42mm} \\
\small Departamento de Matem\'atica Aplicada I, Universidad de Sevilla, Spain\hspace{-1.4mm}\vspace{-.42mm} \\
\small boza@us.es}
\title{
Exact values and bounds for Ramsey numbers of $C_4$ versus a star graph}
\date{}
\begin{document}
\maketitle
\begin{abstract}
We study Ramsey numbers of the form $R(C_4,K_{1,n})$. We determine the eight previously unknown values of $R(C_4,K_{1,n})$ for $n\le 38$. In particular, we show that $R(C_4,K_{1,27})=33$ and $R(C_4,K_{1,n})=n+7$ for $28\le n\le 33$ and for $n=37$. We also establish new general inequalities relating different values of this function. Specifically, if $m\equiv 2\pmod 6$ with $m\ge 8$, then $R(C_4,K_{1,m^2+3})\le m^2+m+4$, and for all positive integers $a$ and $b$, either $R(C_4,K_{1,a})\ge a+b$ or $R(C_4,K_{1,a+b})\le a+2b$. As consequences, we obtain the functional inequalities $f(2n-f(n)+1)\ge n$ and $f(f(n)+1)\le 2f(n)-n+2$, where $f(n)=R(C_4,K_{1,n})$.

\end{abstract}

\section{Preliminaries}

Let $G$ and $H$ be two graphs. The notation $G \nsupseteq H$ means that $H$ is not isomorphic to a subgraph of $G$. The complement of $G$ is denoted by $\overline{G}$. $\Delta(G)$ and $\delta(G)$ represent the maximum and minimum degrees of $G$, respectively. The number of edges in $G$ is denoted by $e(G)$, and $V(G)$ represents its vertex set. If $A \subseteq V(G)$, then $G[A]$ refers to the subgraph of $G$ induced by $A$. For any $v \in V(G)$, the degree of $v$ in $G$ is denoted by $d_G(v)$.

We will use the following notation from \cite{R}: $K_k$ is a complete graph on $k$ vertices, the graph $kG$ is formed by $k$ disjoint copies of $G$, $G\cup H$ stands for vertex disjoint union of graphs, and the join graph $G+H$ is obtained by adding all of the edges between vertices of $G$ and $H$ to $G\cup H$. $C_k$ is a cycle on $k$ vertices, $K_{1,k}=K_1+kK_1$ is a star on $k+1$ vertices, and $W_k=K_1+C_{k-1}$ is a wheel on $k$ vertices.

The Ramsey number $R(H_1,H_2)$ is the smallest integer $N$ such that for every graph $G$ with $N$ vertices, either $G \supseteq H_1$ or $\overline{G} \supseteq H_2$.

Several works in the literature have studied $R(C_4, K_{1,n})$ and $R(C_4, W_n)$. Both values are equivalent for $n \geq 6$ \cite{ZhaBC1}. Consequently, all results proven for $R(C_4, K_{1,n})$ also apply to $R(C_4, W_n)$.

For simplicity, let $f(n):=R(C_4,K_{1,n})$.

Note that for a graph $\overline{G}$ to avoid $K_{1,n}$ is equivalent to having $\Delta(\overline{G}) \le n-1$. Hence, $f(n)$ is the smallest integer $N$ such that there is no graph $G$ with $N$ vertices where $G \nsupseteq C_4$ and $\delta(G)=N-1- \Delta(\overline{G})\geq N - n$.

The following lemmas will be used to prove the main results:

\begin{lemma} \label{Chen} \cite{Chen}
$f(n-1) \geq f(n) - 2$.
\end{lemma}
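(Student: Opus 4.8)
The plan is to prove the equivalent inequality $f(n)\le f(n-1)+2$ by a vertex–deletion argument on an extremal graph. Recall from the reformulation above that, since $f(n)$ is the least $N$ admitting no $C_4$-free graph $G$ on $N$ vertices with $\delta(G)\ge N-n$, there must exist a $C_4$-free graph $G$ on $m:=f(n)-1$ vertices with $\delta(G)\ge m-n$. If I can delete two vertices from $G$ to obtain a (necessarily still $C_4$-free) graph $G'$ on $m-2$ vertices with $\delta(G')\ge (m-2)-(n-1)=m-n-1$, then, since $f(n-1)-1$ is by definition the largest order of a $C_4$-free graph whose minimum degree is at least its order minus $(n-1)$, the graph $G'$ yields $f(n-1)-1\ge m-2$, that is $f(n-1)\ge f(n)-2$, as desired.

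First I would observe that deleting two vertices $u,v$ lowers the degree of any surviving vertex $w$ by exactly $|N(w)\cap\{u,v\}|$, so a drop of $2$ occurs only at a common neighbour of $u$ and $v$. Hence it suffices to find a pair $\{u,v\}$ with no common neighbour: then every surviving degree drops by at most $1$ and $\delta(G')\ge\delta(G)-1\ge m-n-1$. The key structural input is the standard fact that in a $C_4$-free graph any two vertices have at most one common neighbour, since two common neighbours would close a $C_4$.

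To guarantee such a pair I would use the cherry count $\sum_{z\in V(G)}\binom{d_G(z)}{2}$, which equals the number of incidences (unordered pair, common neighbour); by $C_4$-freeness each of the $\binom{m}{2}$ pairs is counted at most once, so $\sum_{z}\binom{d_G(z)}{2}\le\binom{m}{2}$, with equality exactly when every pair of vertices has precisely one common neighbour. Whenever the inequality is strict, some pair shares no common neighbour and the deletion above completes the proof.

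The main obstacle is the boundary case of equality, i.e.\ when every pair shares exactly one common neighbour. By the Friendship Theorem this forces $G$ to be a windmill, whose non-central vertices all have degree $2$, so $\delta(G)=2$ and the hypothesis $\delta(G)\ge m-n$ gives $m\le n+2$. I expect to dispose of this case either by invoking the known lower bounds for $f(n)$ (which exceed $n+2$ in the relevant range, so the windmill simply cannot occur) or, more self-containedly, by deleting two leaves of a single triangle: the centre then loses exactly $2$ but retains high degree, every remaining leaf loses at most $1$, and a short check shows $\delta(G')\ge m-n-1$ still holds. Verifying the few genuinely small values of $n$ by hand then finishes the argument.
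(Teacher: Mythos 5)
The paper does not actually prove this lemma --- it is quoted from \cite{Chen} --- so there is no internal proof to compare yours against; your argument must stand on its own, and in essence it does. The reformulation (existence of a $C_4$-free graph $G$ on $m=f(n)-1$ vertices with $\delta(G)\ge m-n$, and the sufficiency of producing a $C_4$-free $G'$ on $m-2$ vertices with $\delta(G')\ge m-n-1$) is correct, as is the observation that only a common neighbour of the two deleted vertices can lose two degrees, and the cherry count $\sum_{z}\binom{d_G(z)}{2}\le\binom{m}{2}$, with equality exactly when every pair of vertices has precisely one common neighbour, gives the right dichotomy. One small point you should make explicit: concluding $f(n-1)-1\ge m-2$ from the existence of $G'$ uses that the defining property is monotone under vertex deletion (a good graph on $N+1$ vertices yields one on $N$ vertices); this is standard but is what justifies the phrase ``largest order''.

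The only place needing repair is your treatment of the equality (windmill) case, where your two proposed finishes are not interchangeable. The first one --- appealing to lower bounds on $f(n)$ --- does not work: ruling out the windmill would require $f(n)\ge n+4$, which fails, e.g., at $n=3$, where $f(3)=6$ and the bowtie (the windmill with two triangles) is a genuine extremal graph on $5$ vertices with $\delta=2=5-3$ in which every two vertices do share exactly one neighbour; moreover, lower bounds of that strength are not available to cite in general. So the equality case can really occur and must be handled directly. Fortunately your second, self-contained finish does handle it: deleting the two non-central vertices of one triangle leaves the centre with degree $m-3\ge m-n-1$ (valid since $n\ge 2$), while every other vertex keeps degree $2\ge m-n\ge m-n-1$. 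With that branch chosen the proof is complete for all $n\ge 2$, which is the correct range anyway: the inequality is false at $n=1$ (one has $f(1)=4$ while $R(C_4,K_1)=1$), and it is precisely your windmill/$K_3$ check where the hypothesis $n\ge 2$ is needed. The only heavy external input is the Friendship Theorem, which is a legitimate, if strong, citation for the equality case.
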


\begin{lemma} \label{par3} \cite{Par3}
For $n, m \geq 2$, $f(n) \leq n + \lceil\sqrt{n}\rceil + 1$ and $f(m^2 + 1) \leq m^2 + m + 2$.
\end{lemma}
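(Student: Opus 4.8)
The plan is to reformulate both bounds through the minimum-degree description of $f$ recorded above: to prove $f(n)\le N$ it suffices to show that every $C_4$-free graph $G$ on $N$ vertices has $\delta(G)\le N-n-1$. So in each case I would assume, for contradiction, a $C_4$-free graph $G$ on $N$ vertices with $\delta(G)\ge N-n$ and derive an impossibility. The single tool driving everything is that, since $G\nsupseteq C_4$, any two vertices have at most one common neighbour; counting the paths of length two (``cherries'') centred at each vertex therefore gives $\sum_{v\in V(G)}\binom{d_G(v)}{2}\le\binom{N}{2}$, with equality exactly when every pair of vertices has precisely one common neighbour.

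For the first bound I would set $N=n+\lceil\sqrt n\rceil+1$ and write $s=\lceil\sqrt n\rceil$, so the hypothesis is $\delta(G)\ge s+1$. Since $\binom{x}{2}$ is increasing, the cherry inequality yields $N\binom{s+1}{2}\le\binom{N}{2}$, i.e.\ $s(s+1)\le N-1=n+s$, hence $s^2\le n$. As $s=\lceil\sqrt n\rceil\ge\sqrt n$ we always have $s^2\ge n$, so this is impossible unless $n=s^2$ is a perfect square; this already settles every non-square $n$. When $n=s^2$ the chain is an equality throughout, which forces $G$ to be $(s+1)$-regular and forces every pair of vertices to have exactly one common neighbour. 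By the Friendship Theorem the only such graph is a windmill, which is not regular once it has more than three vertices, and since $n\ge2$ gives $s\ge2$ and $N=s^2+s+1\ge7$, this contradiction finishes the first bound.

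For the sharper bound $f(m^2+1)\le m^2+m+2$ I would take $N=m^2+m+2$, so $\delta(G)\ge m+1$. Here the crude count is off by one (it only gives $m^2+m\le m^2+m+1$), so a direct contradiction is unavailable and the structure must be used. First I would pin down the degrees: taking a vertex $u$ of maximum degree, the $C_4$-free condition makes $G[N(u)]$ a matching and lets each outside vertex send at most one edge into $N(u)$; comparing the two counts of edges between $N(u)$ and the rest gives $\Delta(G)\,m\le N-1$, hence $\Delta(G)\le m+1$ and $G$ is $(m+1)$-regular. Regularity then makes the cherry inequality tight up to exactly $N/2$ pairs, and a short local count shows each vertex has a unique ``mate'' sharing no common neighbour, so these mates form a fixed-point-free pairing and the whole common-neighbour pattern is rigid.

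The main obstacle is to eliminate this surviving $(m+1)$-regular configuration. I would encode the rigid pattern as the adjacency-matrix identity $A^2=mI+J-P$, where $P$ is the involutory permutation matrix of the mate pairing; this shows that every eigenvalue $\theta\ne m+1$ of $A$ satisfies $\theta^2\in\{m-1,m+1\}$ and that each $A$-eigenspace is a $P$-eigenspace. Comparing the combinatorial value of $\mathrm{tr}(AP)$ (which is $0$ or $N$ according to whether mates are adjacent) with the value forced by $\mathrm{tr}(A)=0$, together with the requirement that the eigenvalue multiplicities be non-negative integers, is what should produce the contradiction. I expect this final step to carry all the difficulty: for most residues of $m$ the irrationality of $\sqrt{m\pm1}$ kills the configuration at once, but the borderline $m$ for which the spectrum is integral and feasible are genuinely delicate and seem to require the finer arithmetic of the multiplicities, or an appeal to the known extremal $C_4$-free numbers near projective-plane orders, to rule out.
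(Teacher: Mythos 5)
First, a remark on the comparison itself: the paper contains no proof of this lemma---it is quoted verbatim from \cite{Par3}---so there is no ``paper's proof'' to measure you against, and your attempt has to stand on its own. For the first inequality it does: the cherry count $\sum_{v}\binom{d_G(v)}{2}\le\binom{N}{2}$ with $\delta(G)\ge s+1$, $s=\lceil\sqrt{n}\rceil$, gives $s^2\le n$, which is absurd unless $n=s^2$; and in the perfect-square case the forced equalities ($(s+1)$-regularity and ``every pair has exactly one common neighbour'') contradict the Friendship Theorem, since a windmill on $N=s^2+s+1\ge 7$ vertices is never regular. That argument is complete and correct.

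The second inequality, however, is not proved; what you give is a correct reduction followed by a plan, and the plan's tools are provably insufficient. The reduction is fine: $\Delta(G)m\le N-1$ forces $(m+1)$-regularity, the disjointness of the sets $N(w)\setminus\{v\}$, $w\in N(v)$, gives each vertex exactly one mate, and $A^2=mI+J-P$ holds with $P$ a fixed-point-free involution commuting with $A$, so eigenvalues off $\mathbf{1}$ satisfy $\theta^2\in\{m-1,m+1\}$. Even your parenthetical claim that $\mathrm{tr}(AP)\in\{0,N\}$ is salvageable (for $x\in N(v)$ not the mate of $v$, $x$ has exactly one neighbour inside $N(v)$, so $G[N(v)]$ is a perfect matching on $m+1$ or on $m$ vertices according as the mate of $v$ lies outside or inside $N(v)$; parity of $m$ then makes the alternative uniform over all vertices), and with it the trace arithmetic does eliminate every even $m$ and almost every odd $m$. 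But it cannot eliminate $m=5$, i.e.\ precisely the instance $f(26)\le 32$ of the lemma: take eigenvalues $6$ (multiplicity $1$), $\sqrt6$ and $-\sqrt6$ (multiplicity $8$ each), $2$ (multiplicity $6$) and $-2$ (multiplicity $9$). Then all multiplicities are non-negative integers, $\mathrm{tr}(A)=6+12-18=0$, $\mathrm{tr}(P)=1-16+15=0$, $\mathrm{tr}(AP)=0$ (consistent with mates being non-adjacent, which is what parity forces for odd $m$), and even $\mathrm{tr}(A^3)=192=6\cdot 32$ matches the triangle count. Since $A$, $P$, $J$ commute and every word in them reduces via $A^2=mI+J-P$, \emph{every} trace identity is determined by these multiplicities, so no amount of trace-versus-combinatorics comparison or integrality checking can yield a contradiction for $m=5$.

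So the gap is concrete: the endgame you defer to (``the finer arithmetic of the multiplicities'') cannot close the case $m=5$, and you never execute it for any case. Closing $m=5$ requires a genuinely different input---Parsons' own combinatorial argument, or, as you yourself hint, the computed extremal numbers (a $6$-regular $C_4$-free graph on $32$ vertices would have $96>\mathrm{ex}(32;C_4)=92$ edges, cf.\ \cite{AfMcK}). As written, the second half of the lemma remains unproved in your proposal, and your own closing sentence concedes as much.
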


This last lemma can be expressed as:
\begin{corollary} \label{cop3}
For $n \geq 2$, $f(n) \leq n + \lceil\sqrt{n-1}\rceil + 1$.
\end{corollary}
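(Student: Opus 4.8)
The plan is to derive the corollary directly from Lemma \ref{par3} by comparing the two ceiling quantities $\lceil\sqrt{n}\rceil$ and $\lceil\sqrt{n-1}\rceil$. First I would establish the elementary fact that $\lceil\sqrt{n-1}\rceil \le \lceil\sqrt{n}\rceil$, with strict inequality precisely when $n-1$ is a perfect square. Indeed, if $a = \lceil\sqrt{n-1}\rceil < \lceil\sqrt{n}\rceil$ then $\sqrt{n-1} \le a < \sqrt{n}$, forcing $n-1 \le a^2 < n$ and hence $a^2 = n-1$; conversely, if $n-1 = m^2$ then $\lceil\sqrt{n-1}\rceil = m$ while $\lceil\sqrt{n}\rceil = m+1$ (since $m < \sqrt{m^2+1} < m+1$ for $m \ge 1$). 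This dichotomy is the crux: the bound of Lemma \ref{par3} and the bound claimed in the corollary agree except at the values $n = m^2+1$, and there the second, sharper inequality of Lemma \ref{par3} is tailored to supply exactly the improvement required.

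With this in hand I would split into two cases. When $n-1$ is not a perfect square, the two ceilings coincide, so the first inequality of Lemma \ref{par3} reads $f(n) \le n + \lceil\sqrt{n}\rceil + 1 = n + \lceil\sqrt{n-1}\rceil + 1$, which is exactly the claim. When $n-1 = m^2$ with $m \ge 2$, I would instead invoke the second inequality of Lemma \ref{par3}, namely $f(m^2+1) \le m^2 + m + 2$; since here $n + \lceil\sqrt{n-1}\rceil + 1 = (m^2+1) + m + 1 = m^2 + m + 2$, the bound again holds, in fact with equality.

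The only genuine obstacle is the boundary case $m = 1$, that is $n = 2$, for which the second part of Lemma \ref{par3} is not stated, as it requires $m \ge 2$. I would dispose of this by a direct computation using the reformulation of $f$ given earlier: a graph $G$ on $N$ vertices with $\overline{G} \nsupseteq K_{1,2}$ has $\Delta(\overline{G}) \le 1$, so $\overline{G}$ is a matching. For $N = 4$ every such $G$ (the complement of the empty graph, of a single edge, or of a perfect matching) contains $C_4$, so no $C_4$-free graph on $4$ vertices has $\delta(G) \ge N-2$; meanwhile $K_3$ witnesses $f(2) > 3$. Hence $f(2) = 4 = 2 + \lceil\sqrt{1}\rceil + 1$, confirming the corollary for the last remaining value of $n$ and completing the argument.
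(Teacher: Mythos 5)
Your proof is correct and follows essentially the same route the paper intends: the paper states the corollary as a direct restatement of Lemma \ref{par3} with no written argument, and the implicit derivation is exactly your dichotomy --- when $n-1$ is not a perfect square the ceilings $\lceil\sqrt{n-1}\rceil$ and $\lceil\sqrt{n}\rceil$ coincide and the first bound of the lemma applies, while for $n-1=m^2$ the second bound yields precisely $n+\lceil\sqrt{n-1}\rceil+1$. Your explicit handling of $n=2$ (where Lemma \ref{par3} as stated requires $m\ge 2$, so neither inequality covers it) addresses a boundary case the paper silently glosses over, and your direct verification that every $4$-vertex graph whose complement is a matching contains $C_4$, hence $f(2)=4$, closes it correctly.
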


In Section \ref{gr}, we prove the following results:
\begin{itemize}
\item If $m \equiv 2 \pmod{6}$ and $m \ge 8$, then $f(m^2+3)\le m^2+m+4$.
\item $f(a)\ge a+b$ or $f(a+b)\le a+2b$. As consequences, we obtain
$f(2n-f(n)+1)\ge n$ and $f(f(n)+1)\le 2f(n)-n+2$.
\end{itemize}
These results will be used in Section \ref{sv} to obtain exact values and bounds on $f(n)$ for small $n$.

\section{General Results} \label{gr}

The main results of this section are:

\begin{theorem} \label{teo2}
Let $m\equiv 2 \,(\text{mod}\,\, 6)$ with $m\ge 8$. Then $f(m^2+3)\le m^2+m+4$.
\end{theorem}

\begin{proof}
Suppose there exists a graph $G$ with $m^2+m+4$ vertices such that
$G\nsupseteq C_4$ and $G\nsupseteq K_{1,m^2+3}$. Then $\delta(G)\ge m+1$.

Fix a vertex $v\in V(G)$. Since $G\nsupseteq C_4$, every vertex of $N_G(v)$ is adjacent
to at most one other vertex of $N_G(v)$; otherwise two such neighbours together with $v$
would form a copy of $C_4$. Moreover, if a vertex
$w\in V(G)\setminus (N_G(v)\cup\{v\})$ were adjacent to two vertices of $N_G(v)$, then
$v$, $w$, and these two vertices would also form a $C_4$. Hence every vertex of
$N_G(v)$ is adjacent to at least $d_G(v)-2$ vertices outside $N_G(v)\cup\{v\}$.

If $d_G(v)\ge m+2$, then there are at least
$(m+2)(m-1)$ edges joining $N_G(v)$ to
$V(G)\setminus (N_G(v)\cup\{v\})$. Since
$|V(G)\setminus (N_G(v)\cup\{v\})|
\le m^2+m+4-1-(m+2)<(m+2)(m-1)$,
some vertex outside $N_G(v)\cup\{v\}$ must be adjacent to at least two vertices of
$N_G(v)$, a contradiction. Therefore $d_G(v)\le m+1$.

Since $\delta(G)\ge m+1$, it follows that every vertex of $G$ has degree exactly $m+1$.

For each $v\in V(G)$, let $t_v$ denote the number of edges with both endpoints in $N_G(v)$. $t_v$ is the number of triangles to which $v$ belongs, and the total number of triangles in $G$ is $(\sum_{v\in V(G)}t_v)/3$.

Clearly, there are $2t_v$ vertices in $N_G(v)$ adjacent to $m-1$ vertices in $N_{\overline{G}}(v)$ and $m+1-2t_v$ vertices in $N_G(v)$ adjacent to $m$ vertices in $N_{\overline{G}}(v)$.

Since $0\le t_v\le\lfloor (m+1)/2\rfloor=m/2$, and each vertex in $N_G(v)$ is adjacent to at most one other vertex in $N_G(v)$,
the number of edges with one endpoint in $N_G(v)$ and the other in $N_{\overline{G}}(v)$ is $2t_v(m-1)+(m+1-2t_v)m$. Since $|N_{\overline{G}}(v)|=m^2+2$, it follows that $2t_v(m-1)+(m+1-2t_v)m\le m^2+2$ and hence $t_v\ge m/2-1$.

Assuming that $t_v=m/2$ for every $v\in V(G)$,  the number of triangles in $G$ would be $\left(\sum_{v\in V(G)}t_v\right)\!/3=m(m^2+m+4)/6.$ Since $m\equiv 2 \,(\text{mod}\, 6)$, let $a=(m-2)/6$, which is an integer. The total number of triangles then becomes $36a^3+42a^2+20a+10/3$, which is not an integer, leading to a contradiction. Therefore, there must exist a vertex $v_0\in V(G)$ such that $t_{v_0}=m/2-1$.

Let $F=G[V(G)\setminus(N_G(v_0)\cup\{v_0\})]$, so $|V(F)|=m^2+2$. Let $u_1,\ldots,u_{m+1}$ be the vertices in $N_G(v_0)$, with $u_{2i-1}$ adjacent to $u_{2i}$ for $1\le i\le m/2-1$. Define $A_j=N_G(u_j)\cap V(F)$ for $1\le j\le m+1$. Then, $|A_j|=m-1$ for $1\le j\le m-2$ and $|A_j|=m$ for $m-1\le j\le m+1$. Since $G\nsupseteq C_4$, if $i\ne j$, then $A_i\cap A_j=\emptyset$. Thus $|\bigcup_{i=1}^{m+1}A_i|=\sum_{i=1}^{m+1}|A_i|=(m-2)(m-1)+3m=m^2+2$, implying $\bigcup_{i=1}^{m+1}A_i=V(F)$.

Since $|A_1|=m-1$ is odd and $\delta(F[A_1])\le 1$, there exists $w_1\in A_1$ that is not adjacent to any other vertex in $A_1$. If $w_1$ is adjacent to two vertices in $A_i$, for some $i$, those two vertices  together with $w_1$ and $u_i$ would form a $C_4$, leading to $w_1$ being adjacent to at most one vertex in $A_i$.

If $w_1$ were adjacent to a vertex in $A_2$, then, since $u_1$ and $u_2$ are adjacent, a $C_4$ would be formed. Thus, $w_1$ is adjacent to $u_1$ and at most one vertex in each $A_i$ with $3\le i\le m+1$. Consequently, $w_1$ is adjacent to at most $m$ vertices in $G$, which leads to a contradiction and yields the desired result.
\end{proof}

\begin{lemma} \label{l5}
For every $n\in \mathbb{N}$ and every $m$ such that $n\le m\le f(n)-1$, there exists a graph $G$ with $m$ vertices such that $G\nsupseteq C_4$ and $\delta(G)=m-n$.
\end{lemma}
\begin{proof}
Let $G$ be a graph with $m$ vertices and the minimum possible number of edges among all graphs satisfying $G\nsupseteq C_4$ and $\delta(G)\ge m-n$. Suppose that $\delta(G)\ge m-n+1$. Let $x$ be an edge of $G$. Then $G-x\nsupseteq C_4$, and
$\delta(G-x)\ge \delta(G)-1\ge m-n$. This contradicts the minimality of the number of edges of $G$. Therefore, $\delta(G)=m-n$, which proves the result.
\end{proof}

\begin{theorem} 
$f(a) \ge a+b$ or $f(a+b) \le a+2b$.
\end{theorem}

\begin{proof}
Assume that $f(a+b) > a+2b$. By Lemma \ref{l5}, there exists a graph $G$ with $a+2b$ vertices such that $G \nsupseteq C_4$ and $\overline{G} \nsupseteq K_{1, a+b}$ and $\delta(G)=b$. Hence $\delta(G) \ge b$. Let $v_0 \in V(G)$ with $d_G(v_0)=b$ and define $F = G\left[N_{\overline{G}}(v_0)\right]$.

The graph $F$ has $a+b-1$ vertices and satisfies $F \nsupseteq C_4$. If there existed a vertex $w \in V(F)$ adjacent in $G$ to two vertices in $N_G(v_0)$, then these three vertices together with $v_0$ would form a $C_4$, a contradiction. Therefore, for every $w \in V(F)$ we have $d_F(w) \ge d_G(w) - 1 \ge b-1$, and consequently $d_{\overline{F}}(w)=(a+b-1) - 1 - d_F(w) \le a-1$. 
Thus $\Delta(\overline F)\le a-1$, and therefore
$\overline F\nsupseteq K_{1,a}$.
Since $|V(F)|=a+b-1$ and $F\nsupseteq C_4$, it follows that
$f(a)\ge a+b$.
\end{proof}

\begin{corollary} \label{pro}
$f(2n + 1 - f(n)) \ge n$.
\end{corollary}
\begin{proof}
Let $a = 2n - f(n) + 1$ and $b = f(n) - n - 1$. Then $f(a+b) = a + 2b + 1$, and hence $f(a) \ge a+b = n$. This completes the proof.
\end{proof}

\begin{corollary} \label{pro2}
$f(f(n)+1) \le 2f(n)-n+2$.
\end{corollary}
\begin{proof}
Let $a =n$ and $b = f(n)-n+1$. Then $f(a) = a + b - 1$, and hence $f(f(n)+1)=f(a+b) \le a+2b = 2f(n)-n+2$. This completes the proof.
\end{proof}

\section{Small values} \label{sv}

In this section, we present the new exact values or bounds of $f(n)$ for small values of $n$, marked with an asterisk (*) in the following tables.
\vspace{-8mm}
\begin{center}
$$\left.\begin{array}{|c|c|c|c|c|c|c|c|c|c|c|c|c|c|c|c|c|c|c|c|c|c|c|c|c|c|c|}\hline
n & \!\mbox{\small 1}\! & \!\mbox{\small 2}\! & \!\mbox{\small 3}\! & \!\mbox{\small 4}\! & \!\mbox{\small 5}\! & \!\mbox{\small 6}\! & \!\mbox{\small 7}\! & \!\mbox{\small 8}\! & \!\mbox{\small 9}\! & \!\mbox{\small 10}\! & \!\mbox{\small 11}\! & \!\mbox{\small 12}\! & \!\mbox{\small 13}\! & \!\mbox{\small 14}\! & \!\mbox{\small 15}\! & \!\mbox{\small 16}\! & \!\mbox{\small 17}\! & \!\mbox{\small 18}\! & \!\mbox{\small 19}\!& \!{\mbox{\small 20}}\! & \!{\mbox{\small 21}}\! & \!{\mbox{\small 22}}\! & \!{\mbox{\small 23}}\! & \!{\mbox{\small 24}}\! \\ \hline
f(n) &\!4\! & \!4\! & \!6\! & \!7\! & \!8\! & \!9\! & \!11\! & \!12\! & \!13\! & \!14\! & \!16\! & \!17\! & \!18\! & \!19\! & \!20\! & \!21\! & \!22\! & \!23\! & \!24\!& \!25\! & \!27\! & \!28\! & \!29\! & \!30\! \\ \hline
Ref. & & \multicolumn{3}{|c|}{\small\cite{ChH2}} & \!{\small\cite{Clan}}\! & \!{\small\cite{Par3}}\! & \!{\small\cite{FRS4}}\! & \multicolumn{2}{|c|}{\small\cite{Tse1}} & \multicolumn{2}{|c|}{\small\cite{Par3}} & \!{\small\cite{Tse1}}\! & \multicolumn{2}{|c|}{\small\cite{DyDz1}} & \!{\small\cite{ZhaBC1}}\! & \multicolumn{2}{|c|}{\small\cite{Par3}} & \!{\small\cite{ZhaBC1}}\! & \multicolumn{2}{|c|}{\small\cite{WuSR}} & \!{\small\cite{Par5}}\! & \!{\small\cite{SunSh}}\! & \!{\small\cite{Par5}}\! & \!{\small\cite{WuSZR}}\! \\ \hline
\end{array}\right.$$
$$\left.\begin{array}{|c|c|c|c|c|c|c|c|c|c|c|c|c|c|c|c|c|c|c|c|c|c|c|c|c|c|c|c|}\hline
n  & \!{\mbox{\small 25}}\! & \!{\mbox{\small 26}}\!& \!{\mbox{\small 27}}\! & \!{\mbox{\small 28}}\! & \!{\mbox{\small 29}}\! & \!{\mbox{\small 30}}\! & \!{\mbox{\small 31}}\! & \!{\mbox{\small 32}}\! & \!{\mbox{\small 33}}\! & \!{\mbox{\small 34}}\!& \!{\mbox{\small 35}}\! & \!{\mbox{\small 36}}\! & \!{\mbox{\small 37}}\! & \!{\mbox{\small 38}}\! & \!{\mbox{\small 39}}\! & \!{\mbox{\small 40}}\! & \!{\mbox{\small 41}}\! & \!{\mbox{\small 42}}\! & \!{\mbox{\small 43}}\!   & \!{\mbox{\small 44}}\! & \!{\mbox{\small 45}}\! \\ \hline
f(n) & \!31\! & \!32\! & \!{\bf 33}\! & \!{\bf 35}\! & \!{\bf 36}\! & \!{\bf 37}\! & \!{\bf 38}\! & \!{\bf 39}\! & \!{\bf 40}\! & \!41 & \!42\! & \!43\! & \!{\bf 44}\! & \!45\! & /46\!  & \!47\! & \!49\! & /50 & \!51\!  & /52  & \!53\! \\ \hline
Ref. & \multicolumn{2}{|c|}{\small\cite{Par3}} & \multicolumn{7}{|c|}{\bf *} & \!{\small\cite{WuSR}}\! & \!{\small\cite{WuSR}}\! & \!{\small\cite{WuSR}}\! & {\bf *} & \!{\small\cite{ZhaCC2}}\! & \!{\small\cite{WuSR}}\! & \!{\small\cite{ZhaCC2}}\! & \multicolumn{2}{|c|}{\small\cite{Par3}} & \!{\small\cite{WuSR}}\!  & \!{\small\cite{Par3}}\! & \!{\small\cite{ZhaCC3}}\! \\ \hline
\end{array}\right.$$
$$\left.\begin{array}{|c|c|c|c|c|c|c|c|c|c|c|c|c|c|c|c|c|c|c|c|c|c|c|}\hline
n  & \!{\mbox{\small 46}}\!  & \!{\mbox{\small 47}}\! & \!{\mbox{\small 48}}\! & \!{\mbox{\small 49}}\! & \!{\mbox{\small 50}}\! & \!{\mbox{\small 51}}\! & \!{\mbox{\small 52}}\! & \!{\mbox{\small 53}}\! & \!{\mbox{\small 54}}\! & \!{\mbox{\small 55}}\! & \!{\mbox{\small 56}}\! & \!{\mbox{\small 57}}\! & \!{\mbox{\small 58}}\! & \!{\mbox{\small 59}}\! & \!{\mbox{\small 60}}\! & \!{\mbox{\small 61}}\! & \!{\mbox{\small 62}}\! & \!{\mbox{\small 63}}\! \\ \hline
f(n) & /54\! & \!55\! & \!56\! & \!57\! & \!58\! & \!{\bf 59}/60\! & \!{\bf 60}/61\! & \!{\bf 61}/62\! & \!62/63\! & \!64\! & /65\! & \!66\! & \!67\! & \!68\! & \!69\! & \!70\! & \!71\! & \!72\! \\ \hline
Ref. & \!{\small\cite{NoBa}}\! & \!{\small\cite{WuSR}}\! & \!{\small\cite{ZhaCC3}}\! & \multicolumn{2}{|c|}{\small\cite{Par3}} & \multicolumn{3}{|c|}{{\bf *}{\small/\cite{Par3}}} & \!{\small\cite{Chen}/\cite{Par3}}\! & \!{\small\cite{WuSZR}}\! & \!{\small\cite{Par3}}\! & \multicolumn{7}{|c|}{\small\cite{WuSZR}} \\ \hline
\end{array}\right.$$
$$\left.\begin{array}{|c|c|c|c|c|c|c|c|c|c|c|c|c|c|c|c|c|c|c|c|c|}\hline
n & \!{\mbox{\small 64}}\! & \!{\mbox{\small 65}}\! & \!{\mbox{\small 66}}\! & \!{\mbox{\small 67}}\! & \!{\mbox{\small 68}}\! & \!{\mbox{\small 69}}\! & \!{\mbox{\small 70}}\! & \!{\mbox{\small 71}}\! & \!{\mbox{\small 72}}\! & \!{\mbox{\small 73}}\! & \!{\mbox{\small 74}}\! & \!{\mbox{\small 75}}\! & \!{\mbox{\small 76}}\! & \!{\mbox{\small 77}}\! & \!{\mbox{\small 78}}\! & \!{\mbox{\small 79}}\! & \!{\mbox{\small 80}}\! & \!{\mbox{\small 81}}\! & \!{\mbox{\small 82}}\! \\ \hline
f(n) & \!73\! & \!74\! & \!75\! & \!{\bf 76}\! & \!77\! & {\bf 78}/ & \!79\! & \!{\bf 80}/81\! & \!81/82\! & \!83\! & \!84\! & \!85\! & \!86\! & \!87\! & \!88\! & \!89\! & \!90\! & \!91\! & \!92\! \\ \hline
Ref. & \!{\small\cite{Par3}}\! & \!{\small\cite{WuSZR}}\! & \!{\small\cite{ZhaCC2}}\! & {\bf *} & \!{\small\cite{ZhaCC2}}\! & \!{\bf *}\! & \!{\small\cite{ZhaCC2}}\! & \!{\bf *}{\small/\cite{Par3}}\! & \!{\small\cite{Chen}/\cite{Par3}}\! & \multicolumn{8}{|c}{\small\cite{WuSZR}}\! & \multicolumn{2}{|c|}{\small\cite{Par3}} \\ \hline
\end{array}\right.$$
\end{center}

Given the equivalence between $f(n)$ and $R(C_4, W_n)$, some references in the previous tables pertain to bounds or exact values of
 $R(C_4, W_n)$.

 For $n\in J=\{34,35,36,37,38,39,43\}$, let $H_n$ denote the House of Graphs graph with identifier $53036$, $56941$, $56942$, $56943$, $56944$, $56945$, and $52632$, respectively~\cite{HoG}. The following results were computationally verified:
\begin{lemma} \label{infe}
$H_n\nsupseteq C_4$, for $n\in J$. Additionally, $\Delta(\overline{H_{34}})=27$, $\Delta(\overline{H_{35}})=28$, $\Delta(\overline{H_{36}})=29$, $\Delta(\overline{H_{37}})=30$, $\Delta(\overline{H_{38}})=31$, $\Delta(\overline{H_{39}})=32$ and $\Delta(\overline{H_{43}})=36$.
\end{lemma}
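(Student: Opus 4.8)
The plan is to verify both assertions directly from the explicit adjacency matrices of the graphs $H_n$ listed in the Appendix, since (as the surrounding text indicates) these are finite, decidable checks. Throughout, let $A_n$ denote the symmetric $n \times n$ zero-diagonal $0/1$ matrix with $(A_n)_{ij} = 1$ exactly when vertices $i$ and $j$ are adjacent in $H_n$.

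For the $C_4$-freeness, the key reduction is the elementary observation that a simple graph $G$ contains $C_4$ as a subgraph if and only if some pair of distinct vertices has at least two common neighbours: if distinct vertices $u, w$ have distinct common neighbours $x, y$, then $u\,x\,w\,y$ is a $4$-cycle, and conversely any $C_4 = a\,b\,c\,d$ exhibits $a, c$ as a pair with the two common neighbours $b, d$. Since the number of common neighbours of distinct vertices $i, j$ equals $(A_n^2)_{ij}$, the inner product of rows $i$ and $j$ of $A_n$, the condition $H_n \nsupseteq C_4$ is equivalent to requiring every off-diagonal entry of $A_n^2$ to be at most $1$. I would therefore form $A_n^2$ for each $n \in J$ and confirm that all its off-diagonal entries lie in $\{0,1\}$; equivalently, that no two distinct rows of $A_n$ share two common positions holding a $1$.

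For the maximum degree of the complement, I use that on $n$ vertices $d_{\overline{H_n}}(v) = (n-1) - d_{H_n}(v)$ for every vertex $v$, so $\Delta(\overline{H_n}) = (n-1) - \delta(H_n)$. Hence it suffices to read off the degree sequence of $H_n$ as the vector of row sums of $A_n$, extract the minimum degree $\delta(H_n)$, and check that $(n-1) - \delta(H_n)$ equals the claimed value. In each of the seven cases this amounts to confirming $\delta(H_n) = 6$, which yields $\Delta(\overline{H_n}) = n - 7$, matching $27, 28, 29, 30, 31, 32, 36$ for $n = 34, \ldots, 39, 43$ respectively.

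The main (and essentially only) obstacle is not conceptual but one of scale and reliability: for the largest graph $H_{43}$ there are $\binom{43}{2} = 903$ vertex pairs to test in the first step, so the verification is best delegated to a computer, where both the squaring $A_n \mapsto A_n^2$ and the row-sum computation are entirely routine. Care in transcribing the matrices from the Appendix is what guarantees correctness; there is no hidden combinatorial difficulty, which is precisely why the statement is flagged as computationally verified.
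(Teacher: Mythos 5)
Your proposal is correct and matches the paper's treatment: the paper offers no argument beyond declaring the lemma computationally verified from the Appendix matrices, and your checks — that every off-diagonal entry of $A_n^2$ is at most $1$ (equivalent to $C_4$-freeness via the common-neighbour characterization) and that $\Delta(\overline{H_n})=(n-1)-\delta(H_n)$ with $\delta(H_n)=6$ in all seven cases — are exactly the standard way to carry out that verification.
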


We now present the main result of this section:

\begin{theorem}
$f(27)=33$, $f(28)=35$, $f(29)=36$, $f(30)=37$, $f(31)=38$, $f(32)=39$, $f(33)=40$, $f(37)=44$, and $f(67)=76$.
\end{theorem}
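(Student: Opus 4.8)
The plan is to prove each equality by matching a lower bound against an upper bound, using the general results of Section~2 together with the $C_4$-free witnesses of Lemma~\ref{infe}. For the six values $f(28)=35$, $f(29)=36$, $f(30)=37$, $f(31)=38$, $f(32)=39$ and $f(37)=44$, the upper bounds are immediate from Corollary~\ref{cop3}: for each such $n$ one has $\lceil\sqrt{n-1}\rceil=6$, so $f(n)\le n+6+1$, e.g. $f(28)\le 28+\lceil\sqrt{27}\rceil+1=35$ and $f(37)\le 37+\lceil\sqrt{36}\rceil+1=44$. The matching lower bounds come from Lemma~\ref{infe}: since each $H_n$ is $C_4$-free on $n$ vertices with $\overline{H_n}\nsupseteq K_{1,\Delta(\overline{H_n})+1}$, it witnesses $f(\Delta(\overline{H_n})+1)\ge n+1$. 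Thus $H_{34}$ gives $f(28)\ge 35$, $H_{35}$ gives $f(29)\ge 36$, and so on through $H_{38}$ giving $f(32)\ge 39$ and $H_{43}$ giving $f(37)\ge 44$. (The same scheme, using $H_{39}$ against $f(33)\le 40$, also yields $f(33)=40$.)

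For $f(67)=76$ I would combine the two dedicated theorems. The upper bound $f(67)\le 76$ is exactly Theorem~\ref{pro2} with $m=8$, since $8\equiv 2\pmod 6$, $8\ge 8$, $8^2+3=67$ and $8^2+8+4=76$. For the lower bound I would apply Theorem~\ref{pro} with $n=76$: the values $f(75)=85$ and $f(76)=86$ are already tabulated from \cite{WuSZR}, so $f(76)>f(75)$, and the theorem gives $f(2\cdot 76+1-f(76))=f(67)\ge 76$. Hence $f(67)=76$.

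The delicate case is $f(27)=33$. The lower bound is easy once $f(28)=35$ is in hand: Lemma~\ref{Chen} gives $f(27)\ge f(28)-2=33$. The obstacle is the upper bound $f(27)\le 33$, since Corollary~\ref{cop3} only yields $f(27)\le 34$. I would argue directly: suppose $G$ is $C_4$-free on $33$ vertices with $\overline{G}\nsupseteq K_{1,27}$, i.e. $\delta(G)\ge 33-1-26=6$. Running the neighborhood count of Theorems~\ref{th} and~\ref{pro2} — each vertex of $N_G(v)$ is adjacent to at most one other vertex of $N_G(v)$, and each vertex outside $N_G(v)\cup\{v\}$ is adjacent to at most one vertex of $N_G(v)$ — shows that a vertex of degree $d$ sends at least $d(\delta-2)$ edges to the $32-d$ vertices outside its closed neighborhood, each receiving at most one. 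Hence $d(\delta-2)\le 32-d$, which with $\delta\ge 6$ forces $d\le 6$, so $G$ is exactly $6$-regular. A second pass of the same count then pins every vertex to $t_v\in\{2,3\}$ triangles and constrains the global triangle total $T=\tfrac13\sum_v t_v$.

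The hard part is closing out this $6$-regular case: unlike in Theorem~\ref{pro2} the triangle count need not be non-integral, and the local constraints turn out to be self-consistent, so a short parity trick does not suffice. I would try to finish by analyzing the auxiliary $2$-regular graph $C$ on $V(G)$ in which $uv\in E(C)$ iff $u,v$ have no common neighbor in $G$ (every vertex shares a neighbor with exactly $30$ others, so $C$ is $2$-regular), together with the forced splitting of its cycles between the $t_v=2$ vertices, whose $C$-edges are $G$-edges, and the $t_v=3$ vertices, whose $C$-edges are $G$-non-edges. If this structural analysis does not itself produce a contradiction, the remaining finite family of candidate $6$-regular $C_4$-free graphs on $33$ vertices can be eliminated by the same computational search used to construct the graphs $H_n$ of Lemma~\ref{infe}. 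Ruling out every $6$-regular $C_4$-free graph on $33$ vertices is where I expect essentially all of the difficulty to lie.
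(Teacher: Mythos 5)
Most of your proposal coincides with the paper's proof: the lower bounds for $f(28),\dots,f(32)$ and $f(37)$ from the witnesses of Lemma \ref{infe}, the lower bound $f(27)\ge f(28)-2$ from Lemma \ref{Chen}, the upper bounds $f(n)\le n+7$ for $28\le n\le 32$ and $f(37)\le 44$ from Lemma \ref{par3} (your use of Corollary \ref{cop3} is equivalent), and both halves of $f(67)=76$ (Theorem \ref{pro2} with $m=8$ for the upper bound; Theorem \ref{pro} with $n=76$ and $f(76)=86>85=f(75)$ from \cite{WuSZR} for the lower bound) are exactly the steps of the paper.

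The genuine gap is the upper bound $f(27)\le 33$, which you leave unfinished. Your reduction is sound as far as it goes: assuming a $C_4$-free $G$ on $33$ vertices with $\overline{G}\nsupseteq K_{1,27}$, the inequality $d(\delta-2)\le 32-d$ with $\delta\ge 6$ does force $G$ to be $6$-regular, and then $t_v\in\{2,3\}$. But the auxiliary $2$-regular graph you build from non-adjacent pairs with no common neighbor is never driven to a contradiction, and you concede the case may ultimately require an unspecified computer search over all $6$-regular $C_4$-free graphs on $33$ vertices. That is a restatement of the difficulty, not a proof. The missing idea is much simpler and purely quantitative, and it uses exactly the information your structural route throws away: $\delta(G)\ge 6$ already gives $2e(G)=\sum_{v}d_G(v)\ge 33\cdot 6=198$, i.e.\ $e(G)\ge 99$, whereas the maximum number of edges of a $C_4$-free graph on $33$ vertices is known to be $96$ \cite{AfMcK}. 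This contradiction is the paper's entire argument for $f(27)\le 33$; no regularity, triangle count, or case analysis is needed, and in particular no new computation beyond citing the known extremal value $ex(33;C_4)=96$.
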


\begin{proof}
By Lemma \ref{infe}, we have $f(28)\ge 35$, $f(29)\ge 36$, $f(30)\ge 37$, $f(31)\ge 38$, $f(32)\ge 39$, $f(33)\ge 40$, and $f(37)\ge 44$. Additionally, by Lemma \ref{Chen}, $f(27)\ge f(28)-2\ge 33$.

Suppose there exists a graph $G$ with 33 vertices such that $G\nsupseteq C_4$ and $\overline{G}\nsupseteq K_{1,27}$. Then, for every $v \in V(G)$, we have $d_{\overline{G}}(v)\le 26$, implying $d_G(v)=33-1-d_{\overline{G}}(v)\ge 6$. Consequently, $2e(G)=\sum_{v\in V(G)}d_G(v)\ge 198$. This leads to a contradiction, as the maximum number of edges in a graph with 33 vertices that does not contain $C_4$ is 96 \cite{AfMcK}. Therefore, $f(27)=33$.

For $28\le n\le 33$, we have $\left\lceil\sqrt{n}\right\rceil=6$, and by Corollary \ref{cop3}, $f(n)\le n+7$. Thus, $f(28)=35$, $f(29)=36$, $f(30)=37$, $f(31)=38$, $f(32)=39$, and $f(33)=40$.

By Corollary \ref{cop3}, $f(37)\le 44$, so $f(37)=44$.

According to \cite{WuSZR}, $f(76)=86$, and by Corollary 7, $76 \le f(153-f(76)) = f(67)$. By Theorem 4, $f(67)\le 76$. Therefore $f(67)=76$.
\end{proof}

The remaining bounds for $f(n)$ for small values of $n$ are as follows:

\begin{proposition}
$f(51)\ge 59$, $f(52)\ge 60$, $f(53)\ge 61$, $f(69)\ge 78$, and $f(71)\ge 80$.
\end{proposition}

\begin{proof}
If $58\le n\le 61$, then by [15], $f(n)=n+9$. Hence, by Corollary 7, $
n\le f(2n+1-f(n))=f(n-8)$. Thus, $f(51)\ge 59$, $f(52)\ge 60$, and $f(53)\ge 61$.

If $77\le n\le 80$, then by [15], $f(n)=n+10$. Hence, by Corollary 7,
$n\le f(2n+1-f(n))=f(n-9)$. Thus, $f(69)\ge 78$ and $f(71)\ge 80$.
\end{proof}

\begin{remark}
{\em Based on the previous results, if $3\le n\le 39$, then $f(n)\ge f(n-1)+1$, and if $2\le n\le 82$, then $f(n)\ge n+\lceil\sqrt{n}\rceil$. No counterexamples to these inequalities are known for larger values of $n$.}
\end{remark}

\end{document}